\documentclass[11pt]{amsart}

\usepackage{amsmath,amssymb,amsthm}
\usepackage{geometry}
\usepackage{hyperref}
\usepackage[numbers,sort&compress]{natbib}

\newtheorem{theorem}{Theorem}[section]
\newtheorem{lemma}[theorem]{Lemma}
\newtheorem{proposition}[theorem]{Proposition}

\theoremstyle{definition}
\newtheorem{definition}[theorem]{Definition}
\newtheorem{example}[theorem]{Example}
\newtheorem{conjecture}[theorem]{Conjecture}
\theoremstyle{remark}

\numberwithin{equation}{section}

\title{Table Problem Revisited}

\author{Xiao-Song Yang}
\address{School of Mathematics and Statistics,
Huazhong University of Science and Technology,
Wuhan, 430074, China}
\email{yangxs@hust.edu.cn}

\subjclass[2020]{Primary 55M20, 57R35; Secondary 54C30}

\date{\today}

\begin{document}

\begin{abstract}
  We revisit Fenn's table theorem from a differential-topological point
of view.  We prove a zero-existence theorem on a cylinder, which gives
a short proof of the horizontal square-table theorem under Fenn's
boundary conditions, and We establish a theorem under more general boundary conditions. 
 We also discuss square tables on saddle surfaces and
conjecture that every sufficiently small square table can be placed 
horizontally on a saddle surface.  We further conjecture that any 
prescribed rectangular table can be placed horizontally on a Fenn graph.
\end{abstract}

\maketitle

%======================================================================
\section{Introduction}
%======================================================================

Let $V=[v_{1},v_{2},v_{3},v_{4}]\subset\mathbb{R}^{3}$ be a set of
four points that forms a square with $\overline{v_{1}v_{3}}$ and
$\overline{v_{2}v_{4}}$ being the diagonals.  Let $G(f)$ be the
graph defined by a continuous function $f:\mathbb{R}^{2}\to\mathbb{R}$,
that is
$G(f)=\{(x,f(x))\in\mathbb{R}^{3}: x\in\mathbb{R}^{2}\}$.
The well-known table problem can be mathematically formulated as
follows.

Given a configuration $V=[v_{1},v_{2},v_{3},v_{4}]\subset\mathbb{R}^{3}$,
is it possible to find a transformation (translation and rotation)
$T:\mathbb{R}^{3}\to\mathbb{R}^{3}$ such that
$T(V)\subset G(f)$?  Furthermore, is it possible to have
$f(Tv_{1})=f(Tv_{2})=f(Tv_{3})=f(Tv_{4})$?

A related question that remains open is whether an affirmative
answer can be given when $V$ is the set of vertices of a rectangle.

Inspired by Dyson's theorem on real-valued continuous maps on the
$2$-dimensional unit sphere $S^{2}$, R.~Fenn proved the following
theorem, which has an intuitive explanation for horizontally
balancing a square table on an uneven floor or hills.

\begin{theorem}[Fenn's theorem]\label{thm:fenn}
  Let $D\subset\mathbb{R}^{2}$ be a convex compact set.  Let
  $f:D\to\mathbb{R}$ be a continuous map satisfying the following
  conditions:
  \begin{enumerate}
    \item[(i)] $f(x)=0$ if $x\in\mathbb{R}^{2}-D$;
    \item[(ii)] $f(x)\ge 0$ if $x\in D$.
  \end{enumerate}
  Then for any $a>0$, there is a square in the plane
  $\mathbb{R}^{2}$ centered at a point $p\in D$, with side length
  $a$, such that $f$ takes the same value on the four vertices of
  this square.
\end{theorem}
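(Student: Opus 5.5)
Write $f$ for its extension by zero to all of $\mathbb{R}^{2}$; this extension is continuous by (i). Parametrize squares of side $a$ by center $p\in\mathbb{R}^{2}$ and angle $\theta\in[0,\pi/2]$. The vertices are $p+R_{\theta}u_{k}$, where $u_{1},\dots,u_{4}$ are the vertices of the axis-parallel square of side $a$ centered at $0$. Rotating by $\theta=\pi/2$ cyclically permutes the vertices, $v_{k}\mapsto v_{k+1}$. Define two continuous functions
\[
F(p,\theta)=f(v_{1})-f(v_{2})+f(v_{3})-f(v_{4}),\qquad G(p,\theta)=f(v_{1})-f(v_{3}).
\]
They satisfy $F(p,\theta+\pi/2)=-F(p,\theta)$ and $G(p,\theta+\pi/2)=f(v_{2})-f(v_{4})$. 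The plan is to find a common zero of the map $\Phi=(F,G_{1},G_{2})$ with $G_{1}=f(v_{1})-f(v_{3})$ and $G_{2}=f(v_{2})-f(v_{4})$. Such a zero gives all four values equal. The domain is a three-dimensional cylinder $K\times S^{1}$, where $S^{1}=\mathbb{R}/\pi\mathbb{Z}$ is the angle circle (period $\pi$ is natural for $G$) and $K$ is a large disk containing $D$. We then argue via a degree or winding obstruction on the boundary torus $\partial K\times S^{1}$.

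The key structural fact is the symmetry $\sigma(p,\theta)=(p,\theta+\pi/2)$, which acts by
\[
\Phi\circ\sigma=(-F,G_{2},-G_{1})\quad\text{up to the sign conventions on }G_{2}.
\]
In the linear part this is a fixed-point-free orthogonal action. One then applies a Borsuk--Ulam/equivariant degree argument: an equivariant map from the boundary to $\mathbb{R}^{3}\setminus\{0\}$ has a prescribed degree parity, which forces a zero inside. The boundary conditions enter as follows.

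\begin{itemize}
\item If $p$ is far from $D$, all vertices lie outside $D$. Then $f\equiv 0$ there and $\Phi$ vanishes identically, which is degenerate. So the zero must be located differently.
\item A cleaner route is a rotation argument at fixed $p$ (Dyson-style). For each center $p$, consider $\theta\mapsto F(p,\theta)$. It is anti-periodic with period $\pi/2$, so it has a zero $\theta(p)$ on $[0,\pi/2]$. Such a zero gives a square with $f(v_{1})+f(v_{3})=f(v_{2})+f(v_{4})$.
\item It then remains to vary $p$ to achieve additionally $f(v_{1})=f(v_{3})$ and $f(v_{2})=f(v_{4})$.
\end{itemize}

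Here we use convexity and nonnegativity. Choose $p$ maximizing a suitable quantity, or more concretely use the zero set $Z=F^{-1}(0)$. Perturb $f$ to be smooth and generic, with $f>0$ in the interior of $D$. Then $Z$ is generically a 2-manifold in $\mathbb{R}^{2}\times S^{1}$. On it we seek a zero of $(G_{1},G_{2})$, and we compute the degree of $(G_{1},G_{2})$ on a component of $Z$ enclosing $D$.

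For centers on a large circle where exactly one or two vertices touch $D$, nonnegativity of $f$ and convexity of $D$ control signs. Where a single vertex lies over $D$, the sign pattern of $(G_{1},G_{2})$ is determined by which vertex it is. Going once around the boundary while tracking which vertex is innermost (by convexity, there is a unique extremal vertex in generic directions), the vector $(G_{1},G_{2})$ visits the four sign quadrants in cyclic order. This gives winding number $\pm1$, hence a zero. Finally, pass from the smooth generic $f$ back to the continuous $f$ by uniform approximation and compactness: all centers stay in a bounded set, and limits of the squares found are squares of side $a$.

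The main obstacle is making the boundary computation rigorous. Near the region where $f$ is zero, the functions degenerate: all values vanish outside $D$, so trivial solutions with all four vertices off $D$ appear, and these must be excluded. To handle this, I would first replace $f$ by $f+\varepsilon\phi$, where $\phi$ is a strictly concave positive bump on a slightly larger convex set $D_{\varepsilon}$. This makes the square's vertex values genuinely distinct on the relevant boundary. I would then work only over centers $p$ for which at least one vertex lies in $D_{\varepsilon}$, and track the winding on the boundary of that region. Checking that the limit square as $\varepsilon\to0$ is not merely a trivial square with all values zero off $D$ is the subtle point. I would try to guarantee it by showing that the produced center lies in $D$, arguing by the extremal-vertex sign pattern.
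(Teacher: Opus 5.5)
\textbf{There is a genuine gap.} Your proposal does not reach a proof. The step that carries all the content, a common zero of $(F,G_1,G_2)$ with center in $D$, rests on a winding computation that is not correct as described.

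\emph{The sign pattern you use never occurs on $Z$.} On $Z=F^{-1}(0)$ there are no configurations in which exactly one vertex has positive value, since there $F=\pm f(v_k)\neq0$. So the half-axis sign pattern you invoke is unavailable on $Z$. Where $Z$ first reaches configurations with positive values, two \emph{adjacent} vertices must enter $D_\varepsilon$ together (an opposite pair alone gives $F>0$). There $(G_1,G_2)$ lies in the open quadrant fixed by that pair. Along this edge the angle turns together with the center, so the innermost pair need not change at all. Concretely, let $D_\varepsilon$ be a disk of radius $>a/\sqrt2$. Then no three vertices of a square lie on $\partial D_\varepsilon$, and the edge consists of four disjoint circles, one for each adjacent pair. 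Along each, the winding number of $(G_1,G_2)$ is $0$, not $\pm1$.

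\emph{The vanishing is forced by symmetry.} Note first that $G$ is $\pi$-antiperiodic, so $\mathbb{R}/\pi\mathbb{Z}$ is not available and one must work on $\mathbb{R}^2\times\mathbb{R}/2\pi\mathbb{Z}$. There $\sigma$ acts freely and preserves orientation, while $(F,G_1,G_2)\mapsto(-F,G_2,-G_1)$ has determinant $-1$. Hence zeros of $\Phi$ come in orbits of four with alternating local degrees. The same holds for $(G_1,G_2)$ on $Z$, whose coorientation $\sigma$ reverses. Every degree over a $\sigma$-invariant region is therefore $0$. This also kills the Borsuk--Ulam route, whose linear action is not free anyway: its square fixes the $F$-axis.

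\emph{Working over a large $K\supset D$ cannot locate the center.} $\Phi\equiv0$ wherever all vertices miss $D$. Moreover, any equal-value square centered outside $D$ is trivial: a line separating the center from $D$ lets at most one vertex of each diagonal lie in $D$, so two values are $0$. You flag this as ``the subtle point'' but leave it open.

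\textbf{The missing idea} is the boundary condition at centers on $\partial D$ itself, which is exactly where convexity and $f\ge0$ enter in the paper. In the fixed frame your pair becomes the tilt vector
\[
W(p,\theta)=\sum_{k=1}^{4}f(v_k)\,(v_k-p)=G_1\,(v_1-p)+G_2\,(v_2-p).
\]
This vector has three useful properties.
\begin{enumerate}
\item[(a)] It is $\pi/2$-periodic in $\theta$ and vanishes exactly when $G_1=G_2=0$.
\item[(b)] On $F^{-1}(0)$ it is a positive multiple of $-v$, the paper's horizontal normal component.
\item[(c)] For $p\in\partial D$ with outward normal $u$, one has $W\cdot u=\sum_k f(v_k)(v_k-p)\cdot u\le0$. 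Indeed, $f(v_k)>0$ forces $v_k\in D$, and convexity then gives $(v_k-p)\cdot u\le0$.
\end{enumerate}
Now make a small generic perturbation on $D\times(\mathbb{R}/\tfrac{\pi}{2}\mathbb{Z})$ so that $-W$ is strictly outward. The slice $D\times\{0\}$ then carries an odd number of zeros. The set $W^{-1}(0)\cap(D\times[0,\tfrac{\pi}{2}])$ consists of arcs, and their endpoints on the two end slices correspond. Because $F(\cdot,\tfrac{\pi}{2})=-F(\cdot,0)$, the endpoints where $F>0$ are odd in number, so some arc carries a sign change of $F$. Letting the perturbation go to $0$ yields the square, with center in $D$ automatically.

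This is the mechanism of Theorem~\ref{thm:main} with $g=\phi$ (your $F$). It is a parity argument on the fundamental domain $D\times[0,\tfrac{\pi}{2}]$, not a degree over the full angle circle.
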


Fenn's theorem depends on the specific boundary conditions on the
continuous map.

As in \cite{Fenn70}, let $D\subset\mathbb{R}^{3}$ be a compact
convex domain of the $x_{1}x_{2}$-plane with the origin in its
interior.  Denote by $a_{1},a_{2},a_{3}$ and $a_{4}$ the vertices
of a square in the plane with center $p$ and fixed side length $d$.
Let $\Pi(p,\alpha)$ be the plane determined by the points
$(a_{1},f(a_{1}))$, $(a_{2},f(a_{2}))$ and $(a_{3},f(a_{3}))$,
with $\alpha$ being the angle between the $x_{1}$-axis and the
vector $\overline{a_{1}a_{2}}$.  Denote by $L(a_{4})$ the line
parallel to the $x_{3}$-axis passing through $a_{4}$.  Let
$q(p,\alpha)=\Pi(p,\alpha)\cap L(a_{4})$.  Then the function
$\phi:D\times S^{1}\to\mathbb{R}$ defined by
$\phi(p,\alpha)=q-f(a_{4})$ is clearly continuous because the
fourth vertex $a_{4}$ depends continuously on the center point $p$
and the angle $\alpha$.

The function $\phi$ has the following properties.
\begin{itemize}
  \item[(i)] If $\phi(x,\alpha)=0$, then
    $\phi(x,\alpha+\frac{\pi}{2})=0$.
  \item[(ii)] If $\phi(x,\alpha)\neq0$, then
    $\phi(x,\alpha)\cdot\phi(x,\alpha+\frac{\pi}{2})<0$.
\end{itemize}

For $(x,\alpha)\in D\times S^{1}$, let $n(x,\alpha)$ be the unit
vector normal to the plane $\Pi(x,\alpha)$ with the
$x_{3}$-component being positive.  Let $v(x,\alpha)=P\circ
n(x,\alpha)$, where $P:\mathbb{R}^{3}\to\mathbb{R}^{2}$ is the
projection map $P(x_{1},x_{2},x_{3})=(x_{1},x_{2},0)$.  We obtain
a map $\Phi:D\times S^{1}\to\mathbb{R}^{3}$ as follows
\begin{equation}\label{eq:Phi}
  \Phi(x,\alpha)=
  \begin{pmatrix}
    \phi(x,\alpha)\\[2pt]
    \psi(x,\alpha)
  \end{pmatrix},
\end{equation}
with $\psi(x,\alpha)=v(x,\alpha)+v(x,\alpha+\frac{\pi}{2})
  +v(x,\alpha+\pi)+v(x,\alpha+\frac{3\pi}{2})$.

Note that $\psi(x,\alpha)=\psi(x,\alpha+\frac{\pi}{2})$.

To prove Fenn's theorem, it is clearly enough to prove the
existence of zero points of the map $\Phi$.  For points
$x\in\phi^{-1}(0)$, $\psi(x,\alpha)=4v(x,\alpha)$.

Unlike \cite{Fenn70}, where the arguments are based on homological
techniques in algebraic topology, we present an elementary proof in
terms of differential topology for this elegant theorem.  In
addition, we can generalize this statement to rectangular tables
and further obtain some new interesting results under different
boundary conditions.

This paper is organized as follows.  In Section~2 we prove a
zero-existence theorem for two-dimensional maps on a cylinder.  In
Section~3 we use this result to give a new proof of Fenn's theorem and
to establish a variant under different boundary conditions.  In
Section~4 we study square tables on saddle surfaces and formulate a
related conjecture.  Finally, in Section~5 we discuss rectangular
tables on Fenn graphs and propose a fixed-size rectangular conjecture.

%======================================================================
\section{A Basic Theorem}
%======================================================================

In this section we mainly establish a general result for later
arguments.

Let $D\subset\mathbb{R}^{2}$ be a convex compact set with
$\partial D$ being $C^{1}$ differentiable.  We first prove the
following fact.

\begin{lemma}\label{lem:basic}
  Assume a smooth map $f:D\times[a,b]\to\mathbb{R}^{2}$ satisfies
  the following conditions:
  \begin{itemize}
    \item[$A_{1}$] $f$ is generic and satisfies
      $f(x,s)\neq0$ for all $(x,s)\in D\times[a,b]$.
    \item[$A_{2}$] The origin $o\in\mathbb{R}^{2}$ is a regular
      value of $f$.
    \item[$A_{3}$] $\deg f|_{\partial D\times\{a\}}$ is an odd
      number.
  \end{itemize}
  Then $f^{-1}(0)$ has a component connecting
  $D\times\{a\}$ and $D\times\{b\}$.
\end{lemma}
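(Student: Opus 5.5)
The plan is a parity count on the compact one-manifold $f^{-1}(0)$. Condition $A_{1}$ as printed says $f$ never vanishes, which would make the conclusion vacuous. I will therefore read it as saying that $f(x,s)\neq 0$ for $(x,s)\in\partial D\times[a,b]$, with "generic" meaning that $0$ is also a regular value of the restrictions $f_{a}=f(\cdot,a)$ and $f_{b}=f(\cdot,b)$ on $D$. Under this reading, $\deg f|_{\partial D\times\{a\}}$ in $A_{3}$ is the winding number of $f_{a}|_{\partial D}:\partial D\cong S^{1}\to\mathbb{R}^{2}\setminus\{0\}$.

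First, I describe the structure of $M:=f^{-1}(0)$. Since $0$ is a regular value of $f$ (by $A_{2}$) and of $f|_{D\times\{a\}}$ and $f|_{D\times\{b\}}$ (by genericity), the preimage theorem for manifolds with boundary applies. Together with $0\notin f(\partial D\times[a,b])$, it shows that $M$ is a compact smooth one-dimensional submanifold of $D\times[a,b]$. It lies in the region $\operatorname{int}D\times[a,b]$, and it meets $\partial(D\times[a,b])$ only in $D\times\{a,b\}$, transversally. Hence
\[
\partial M = \bigl(f_{a}^{-1}(0)\times\{a\}\bigr)\cup\bigl(f_{b}^{-1}(0)\times\{b\}\bigr).
\]
By the classification of compact one-manifolds, $M$ is a finite disjoint union of circles and closed arcs, and each arc has its two endpoints in $\partial M$.

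Next, I show that $\#f_{a}^{-1}(0)$ is odd. The map $f_{a}:D\to\mathbb{R}^{2}$ has $0$ as a regular value, and it has no zeros on $\partial D$. The standard argument removes small disjoint discs around the zeros $z_{1},\dots,z_{k}$ in $\operatorname{int}D$ and notes that $f_{a}/|f_{a}|$ extends over the complement. This gives
\[
\deg\bigl(f_{a}|_{\partial D}\bigr)=\sum_{i=1}^{k}\operatorname{sign}\det Df_{a}(z_{i}),
\]
where each local index is $\pm1$. Reducing mod $2$ yields $k\equiv\deg f|_{\partial D\times\{a\}}\pmod 2$, so $k$ is odd by $A_{3}$. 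This is the step needing the most care: the convexity and $C^{1}$ boundary of $D$ are used only to identify $D$ with a disc and $\partial D$ with $S^{1}$, so that the degree is defined.

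Finally, I argue by parity. Suppose no component of $M$ meets both $D\times\{a\}$ and $D\times\{b\}$. Then every arc component with an endpoint in $D\times\{a\}$ has both endpoints there. So the arcs pair off the points of $f_{a}^{-1}(0)\times\{a\}$, which makes $k$ even and contradicts the previous step. Hence some arc of $M$ runs from $D\times\{a\}$ to $D\times\{b\}$.
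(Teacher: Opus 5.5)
Your proof is correct and follows the same route as the paper. Both arguments use the index-sum (Poincar\'e--Hopf) count to show that $f^{-1}(0)\cap(D\times\{a\})$ has odd cardinality, and then note that arcs of the compact $1$-manifold $f^{-1}(0)$ not reaching $D\times\{b\}$ would pair off these points. Your reading of $A_{1}$ as nonvanishing on $\partial D\times[a,b]$, with genericity giving transversality at the two end levels, is the intended one. You also only need oddness at level $a$, whereas the paper asserts it at level $b$ as well, which would additionally require homotopy invariance of the degree along $\partial D\times[a,b]$.
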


\begin{proof}
  Condition $A_{3}$ implies that $f$ has zero points in
  $D\times\{a\}$.  Since $f$ is generic, the local index of any
  zero point of $f$ restricted to $D\times\{a\}$ is $-1$ or $1$.
  The oddness of $\deg f|_{\partial D\times\{a\}}$ implies that the
  numbers of zero points of $f$ on $D\times\{a\}$ and
  $D\times\{b\}$ are odd, respectively, by virtue of the well-known
  Poincar\'e--Hopf index theorem.

  Since $0\in\mathbb{R}^{2}$ is a regular value of $f$,
  $f^{-1}(0)$ consists of closed curves and segments with endpoints
  in $D\times\{a\}$ and $D\times\{b\}$.  Suppose that no segment
  connects $D\times\{a\}$ and $D\times\{b\}$, then the numbers of
  zero points of $f$ in $D\times\{a\}$ and $D\times\{b\}$,
  respectively, must be even, because each zero point in
  $D\times\{a\}$ or $D\times\{b\}$ is the endpoint of some segment.
  Therefore the oddness of the number of zero points on
  $D\times\{a\}$ and $D\times\{b\}$ implies the existence of a
  component of $f^{-1}(0)$ that connects $D\times\{a\}$ and
  $D\times\{b\}$.
\end{proof}

Let $D\subset\mathbb{R}^{2}$ be a compact convex domain with
$C^{1}$ boundary.  Related to Fenn's theorem on the table problem,
the main theorem in this section can be stated as follows.

\begin{theorem}\label{thm:main}
  Let $f:D\times[a,b]\to\mathbb{R}^{2}$ be a continuous map.
  Assume that this map satisfies the following conditions:
  \begin{enumerate}
    \item[(1)] $f(x)\cdot u(x)>0$ (or $<0$) for all
      $x\in\partial D\times[a,b]$, where $u(x)$ is the unit vector
      field normal to the boundary $\partial D\times[a,b]$ pointing
      outward.
    \item[(2)] $f(x,a)=f(x,b)$ for all $x\in D$.
  \end{enumerate}
  Let $g:D\times[a,b]\to\mathbb{R}$ be a continuous map satisfying
  the following conditions:
  \begin{enumerate}
    \item[(3)] $g(x,a)=0$ for some $(x,a)\in D\times\{a\}$ implies
      $g(x,b)=0$.
    \item[(4)] $g(x,a)\neq0$ implies
      $g(x,a)\cdot g(x,b)<0$.
  \end{enumerate}
  Then there exists a point
  $(\bar{x},\bar{s})\in D\times[a,b]$ such that
  $f(\bar{x},\bar{s})=0$ and $g(\bar{x},\bar{s})=0$.
\end{theorem}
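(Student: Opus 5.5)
We first reduce to the smooth, generic situation of Lemma~\ref{lem:basic} and then pass to the limit. Fix $\varepsilon>0$. Approximate $f$ by a smooth map $f_\varepsilon$ with $\|f-f_\varepsilon\|_{C^0}<\varepsilon$. We choose $f_\varepsilon$ so that three things hold. First, condition (1) still holds on $\partial D\times[a,b]$; this is possible because $f\cdot u$ is bounded below by a positive constant on the compact set $\partial D\times[a,b]$. Second, $f_\varepsilon(x,a)=f_\varepsilon(x,b)$; we arrange this by approximating first on $D\times\{a\}$ and then extending, for instance by viewing $D\times[a,b]$ with its ends identified as $D\times S^1$, which makes sense by condition (2). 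Third, $0$ is a regular value of $f_\varepsilon$ and of $f_\varepsilon|_{D\times\{a\}}$; this follows from Sard's theorem after a small translation $f_\varepsilon-c$ with $|c|$ tiny. By condition (1), on each slice $\partial D\times\{s\}$ the map $f_\varepsilon$ is homotopic, through nonvanishing maps, to the outward normal $u$. Hence $\deg f_\varepsilon|_{\partial D\times\{a\}}=\pm1$, which is odd, and condition $A_3$ holds. (The hypothesis ``$f(x,s)\ne0$'' in $A_1$ is evidently meant on the boundary, and that is exactly what (1) gives.) Lemma~\ref{lem:basic} then yields a connected component $C_\varepsilon$ of $f_\varepsilon^{-1}(0)$ joining a point $(x_0,a)$ to a point $(x_1,b)$.

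Next we use $g$ on this arc. The arc $C_\varepsilon$ is a curve $\gamma(t)=(x(t),s(t))$ with $\gamma(0)=(x_0,a)$ and $\gamma(1)=(x_1,b)$. The difficulty is that $x_0$ and $x_1$ need not coincide, whereas conditions (3) and (4) compare $g(x,a)$ with $g(x,b)$ only at the \emph{same} $x$. This is the main obstacle, and we handle it as follows. We aim to choose the component so that its two ends sit over the same point $x$. Since $f_\varepsilon(\cdot,a)=f_\varepsilon(\cdot,b)$, the zero sets on the two end slices are identical finite sets $Z\subset\mathrm{int}\,D$. Consider the graph whose vertices are the points of $Z$ and whose edges are the segments of $f_\varepsilon^{-1}(0)$; here every bottom copy of a point of $Z$ is joined to exactly one other endpoint, and likewise every top copy. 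Gluing bottom and top along $Z$, this is the preimage of $0$ in $D\times S^1$, which is a closed $1$-manifold. Its homology class in $H_1(D\times S^1;\mathbb{Z}_2)\cong\mathbb{Z}_2$ is nonzero: the intersection number with a slice $D\times\{a\}$ equals $|Z|$ mod $2$, and $|Z|$ is odd. Hence some circle component $\Gamma$ wraps around the $S^1$ direction an odd number of times.

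We now follow $\Gamma$ and track the sign of $g$. Parametrize $\Gamma$ as a loop, and every time it crosses the slice $s=a\equiv b$ at a point $z\in Z$, record the jump from $g(z,b)$ to $g(z,a)$ when lifting back to $D\times[a,b]$. Suppose $g\neq0$ on all of $\Gamma$, viewed in $D\times[a,b]$. Then $g$ has constant sign along each lifted segment. At each crossing, condition (4) forces a sign change, and condition (3) guarantees that a zero at the $a$-end is also a zero at the $b$-end, so the two can be handled together. Segments that go from bottom to bottom or top to top do not cross the seam, whereas those going from bottom to top do. A bottom-to-bottom (or top-to-top) segment returns to the same side, so the sign is preserved around it. The net number of seam crossings is odd, so going once around $\Gamma$ flips the sign of $g$ an odd number of times, which is a contradiction. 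Hence $g$ vanishes at some point $(x_\varepsilon,s_\varepsilon)$ with $f_\varepsilon(x_\varepsilon,s_\varepsilon)=0$. The points where $g$ vanishes only at endpoints are covered by condition (3).

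Finally, let $\varepsilon\to0$. By compactness of $D\times[a,b]$, a subsequence of $(x_\varepsilon,s_\varepsilon)$ converges to some $(\bar x,\bar s)$. Continuity of $g$ gives $g(\bar x,\bar s)=0$, and uniform convergence $f_\varepsilon\to f$ gives $f(\bar x,\bar s)=0$. The delicate points are the sign bookkeeping at the seam crossings and the simultaneous genericity with periodicity; the limit step itself is routine.
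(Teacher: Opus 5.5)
Your proof is correct, and it reaches the conclusion by a somewhat different route from the paper's.

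\textbf{The paper's route.} In the smooth generic case the paper stays in the cut-open cylinder $D\times[a,b]$. It sorts the zeros $(x_i,a)$, $(x_i,b)$ of $f$ on the two ends into the classes $Q_1,Q_2,P_1,P_2$ according to the sign of $g$. It then argues, by counting how the segments of $f^{-1}(0)$ can pair up these endpoints, that some segment must join endpoints where $g$ has opposite signs. In essence: if no segment changes sign, the segments give a perfect matching of the endpoints where $g>0$, and by (4) there are exactly $|Z|$ of those, an odd number.

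\textbf{Your route.} You use (2) to glue the ends into $D\times S^1$. You pick one circle $\Gamma$ of $f_\varepsilon^{-1}(0)$ that meets the seam in an odd number of points. By (4), $g$ must flip sign at each seam point, yet it must return to its starting sign after one circuit. This is a monodromy argument: $g$ behaves like a section of the M\"obius line bundle, which must vanish on any loop of odd winding.

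\textbf{What each buys.} Your version makes the joint role of (2)--(4) transparent and generalizes readily. The price is that you need genericity on the glued manifold: $0$ must be a regular value of $f_\varepsilon$ on $D\times S^1$ and of its restriction to the seam. You do secure this. You also state explicitly that the smoothing must preserve (1) and (2), which the paper's final approximation step uses without saying. The paper's version needs only that segments pair endpoints, with no gluing.

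\textbf{Two small remarks.} First, the appeal to Lemma~\ref{lem:basic} and the component $C_\varepsilon$ plays no role in your final argument. The count $\sum_j|\Gamma_j\cap(D\times\{a\})|=|Z|$, which is odd, already produces $\Gamma$, and no homology is needed. Second, the sentences about bottom-to-bottom segments ``not crossing the seam'' are muddled. The sign flips occur at the junctions between consecutive lifted arcs, one per seam point of $\Gamma$, and that is all your parity count uses.
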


\begin{proof}
  First, we prove the assertion in the above theorem in the case
  where $f$ is a smooth and generic map.

  Let $D_{0}\subset D$ be the set of points satisfying
  $g(x,a)=0$, then $g(x,b)=0$ by condition~(3).

  Let $D_{1}\subset D$ be the set of points with $g(x,a)>0$, and
  $D_{2}\subset D$ be the set of points with $g(x,a)<0$; then
  $g(x,b)<0$ on $D_{1}\times\{b\}$ and $g(x,b)>0$ on
  $D_{2}\times\{b\}$.

  Since $f(x)\cdot u(x)>0$ (or $<0$) by condition~(1), it is easy
  to see that $\deg f|_{D\times\{a\}}$ is $1$.  Thus $f$ has an odd
  number of zero points in $D\times\{a\}$ by Lemma~\ref{lem:basic}.
  Let $(x_{1},a),\dots,(x_{2k+1},a)$ be the zero points of $f$ on
  $D\times\{a\}$; then by condition~(2) it is easy to see that
  $(x_{1},b),\dots,(x_{2k+1},b)$ are the only zero points of $f$ on
  $D\times\{b\}$.  If one of these points lies in
  $D_{0}\times\{a\}\cup D_{0}\times\{b\}$, then the statement of
  the theorem is clearly true.

  Now suppose that this is not the case.  Without loss of
  generality, we can suppose that an even number of points among
  $(x_{1},a),\dots,(x_{2k+1},a)$, say,
  $Q_{1}=\{(x_{1},a),\dots,(x_{2h},a)\}$, lie in
  $D_{1}^{+}\times\{a\}=D_{1}\times\{a\}$, and an odd number of
  points, say,
  $Q_{2}=\{(x_{2h+1},a),\dots,(x_{2k+1},a)\}$, lie in
  $D_{2}^{-}\times\{a\}=D_{2}\times\{a\}$.
  In view of condition~(4), it follows that
  \begin{itemize}
    \item $P_{1}=\{(x_{1},b),\dots,(x_{2h},b)\}\subset
      D_{1}^{-}=D_{1}\times\{b\}$,
  \end{itemize}
  and
  \begin{itemize}
    \item $P_{2}=\{(x_{2h+1},b),\dots,(x_{2k+1},b)\}\subset
      D_{2}^{+}=D_{2}\times\{b\}$.
  \end{itemize}

  Suppose that $f^{-1}(0)$ has no segment connecting a point in
  $Q_{1}$ and a point in $P_{1}\subset D_{1}^{-}$, then there must
  be an even number of points of $Q_{1}$ that can be connected by
  segments of $f^{-1}(0)$ with the points in
  $P_{2}\subset D_{2}^{+}$.  It follows that only an even number of
  points of $P_{2}$ can be connected with some points of $Q_{1}$.
  Since the number of zero points of $f$ in $P_{2}$ is odd by the
  above supposition, it is easy to see that there is at least one
  point $p\in P_{2}$ that is connected by some segment $l$ of
  $f^{-1}(0)$ with a point $q\in Q_{2}$.  Notice that $g(p)>0$ and
  $g(q)<0$, it is easy to see that on the segment $l$ we have a
  point $(\bar{x},\bar{s})\in l$ such that
  $g(\bar{x},\bar{s})=0$.  Keeping in mind that
  $f(\bar{x},\bar{s})=0$, we see that the assertion holds when $f$
  is smooth and generic.

  Now let us return to the continuous case.  Suppose, on the
  contrary, that the statement of the theorem is false; then there
  exists a positive number $\delta$ such that
  \[
    \|f(x,s)\|+|g(x,s)|\ge\delta
  \]
  for all $(x,s)\in D\times[a,b]$, due to the compactness of
  $D\times[a,b]$.  Let
  $\bar{f}:D\times[a,b]\to\mathbb{R}^{2}$ be a smooth and generic
  map satisfying
  \[
    \|f(x,s)-\bar{f}(x,s)\|\le\frac{\delta}{4}
  \]
  for all $(x,s)\in D\times[a,b]$.  Then from the arguments for the
  smooth case, we have a point
  $(\bar{x},\bar{s})\in D\times[a,b]$ with
  $\bar{f}(\bar{x},\bar{s})=0$ and $g(\bar{x},\bar{s})=0$, and this
  leads to the inequality
  \[
    \|f(\bar{x},\bar{s})\|+|g(\bar{x},\bar{s})\|
    \le\frac{\delta}{4},
  \]
  leading to a contradiction.
\end{proof}

%======================================================================
\section{Table Problem}
%======================================================================

In this section we first give a new proof of Fenn's theorem on the
square table problem.  We then
prove a variant under different boundary conditions.

\subsection{A new proof of Fenn's theorem}

\begin{proof}
  Unlike the arguments in \cite{Fenn70}, we instead consider the
  space $D\times\bigl[0,\frac{\pi}{2}\bigr]$ and the maps
  $\phi:D\times\bigl[0,\frac{\pi}{2}\bigr]\to\mathbb{R}$ and
  $v:D\times\bigl[0,\frac{\pi}{2}\bigr]\to\mathbb{R}^{2}$ as in
  Fenn's theorem reviewed in the first section, i.e., the map
  defined in~\eqref{eq:Phi}.

  It is enough to prove that there is a point
  $(x,s)\in D\times\bigl[0,\frac{\pi}{2}\bigr]$ such that
  $\phi(x,s)=0$ and $v(x,s)=0$.

  Suppose that this is not the case.  Then, because of the
  compactness of the set
  $A=\{(x,s)\in D\times\bigl[0,\frac{\pi}{2}\bigr]:
    \phi(x,s)=0\}$, we have $\|v(x,s)\|\ge\delta$ with $\delta>0$
  on the set $A$.

  Because of the convexity of the domain $D$, it is easy to prove
  that $v(x,s)\cdot u(x)\ge0$ for each $x\in\partial D$, where
  $u(x)$ is the unit vector field normal to the boundary and
  pointing outward.  Now it is easy to see that there exists a
  generic smooth map
  $g:D\times\bigl[0,\frac{\pi}{2}\bigr]\to\mathbb{R}^{2}$ such that
  $g(x,s)\cdot u(x)>0$ for each $x\in\partial D$ and satisfies
  \[
    \|g(x,s)-v(x,s)\|<\frac{\delta}{2}
  \]
  for all $(x,s)\in D\times\bigl[0,\frac{\pi}{2}\bigr]$.  Then by
  Theorem~\ref{thm:main}, there exists a point
  $(\bar{x},\bar{s})\in A$ such that $g(\bar{x},\bar{s})=0$.
  Therefore
  \[
    \|v(\bar{x},\bar{s})\|<\frac{\delta}{2},
  \]
  leading to a contradiction.
\end{proof}

The following new result concerns the boundary conditions.

\begin{theorem}\label{thm:newbc}
  Assume a continuous map $f:D\to\mathbb{R}$ satisfies the
  following conditions:
  \begin{enumerate}
    \item[(i)] $f(x)\ge0$ for $x\in\mathbb{R}^{2}-D$, and there is
      a $\delta>0$ such that $f(x)\le0$ if
      $\operatorname{dist}(x,\partial D)\le\delta$;
  \end{enumerate}
  or
  \begin{enumerate}
    \item[(ii)] $f(x)\le0$ for $x\in\mathbb{R}^{2}-D$, and there is
      a $\delta>0$ such that $f(x)\ge0$ if
      $\operatorname{dist}(x,\partial D)\le\delta$.
  \end{enumerate}
  Then for any $a<\frac{\sqrt{2}}{2}\,\delta$ and a point $p\in D$,
  there is a square in $\mathbb{R}^{2}$ centered at $p$ with side
  length $a$ such that $f$ takes the same value on the four
  vertices of the square.
\end{theorem}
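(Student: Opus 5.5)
The plan is to repeat the new proof of Fenn's theorem given above, with the sign of the boundary condition in Theorem~\ref{thm:main} reversed. I read the conclusion as asserting that some square of side $a$ with center $p\in D$ exists; for a fixed center, only rotation is free, which is one parameter, and that cannot generally meet three equality conditions. I will treat case (i); case (ii) follows by replacing $f$ with $-f$, which changes neither the hypotheses' form nor the conclusion. Extend $f$ continuously to $\mathbb{R}^2$ in a way consistent with (i), so that the vertices of squares centered near $\partial D$ are in the domain. Then take $\phi$, $v$ and $\psi$ exactly as in the introduction, but restricted to $D\times[0,\frac{\pi}{2}]$. As in the proof of Fenn's theorem, it suffices to find $(x,s)$ with $\phi(x,s)=0$ and $v(x,s)=0$. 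Then the four lifted vertices are coplanar and the plane is horizontal, so the four values of $f$ coincide.

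First I will check the hypotheses on $g=\phi$. Properties (i) and (ii) of $\phi$ listed in the introduction, applied to $\alpha=0$ and $\alpha+\frac{\pi}{2}$, are exactly conditions~(3) and~(4) of Theorem~\ref{thm:main} with $[a,b]=[0,\frac{\pi}{2}]$. For the $\mathbb{R}^2$-valued map $v$, I also need condition~(2), $v(x,0)=v(x,\frac{\pi}{2})$. This holds because the square at angle $\frac{\pi}{2}$ has the same vertex set, merely relabelled. The plane through three of its lifted vertices is then generally a different one, so literally (2) may fail. If so, I will use $\psi/4$ in place of $v$, which has the required $\frac{\pi}{2}$-periodicity and equals $v$ on $\phi^{-1}(0)$, exactly as noted after \eqref{eq:Phi}.

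The key new step is the boundary sign, condition~(1). Let $x\in\partial D$. The half-diagonal of the square is $a/\sqrt{2}<\delta/2$, so every vertex lies within $\delta$ of $\partial D$. The vertices inside $D$ therefore satisfy $f\le 0$, and those outside satisfy $f\ge 0$. By convexity, the vertices on the outward side of the tangent line at $x$ carry higher values than those on the inward side. Hence the plane through the lifted vertices rises, weakly, in the direction $u(x)$, and its upward normal tilts inward. This gives $v(x,s)\cdot u(x)\le 0$, the opposite sign to Fenn's case, which condition~(1) allows. I expect this step to be the main obstacle. A vertex outside $D$ may still lie on the inward side of the tangent line, and the plane is fitted through only three vertices, so the monotonicity argument needs care. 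I would first try a direct linear-interpolation computation that bounds the slope of $\Pi$ along $u(x)$ using the sign pattern of $f$ at the vertices.

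Finally, the strict inequality required by~(1) comes from the same perturbation as before. Suppose, for contradiction, that $\|v\|\ge\eta>0$ on the compact set $\phi^{-1}(0)$. Approximate $v$ by a generic smooth $\tilde v$ with $\tilde v\cdot u<0$ on $\partial D\times[0,\frac{\pi}{2}]$ and $\|\tilde v-v\|<\eta/2$. Theorem~\ref{thm:main} then gives a point with $\phi=0$ and $\tilde v=0$, so $\|v\|<\eta/2$ there, which is a contradiction.
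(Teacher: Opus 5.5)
Your route is the paper's own. Its proof asserts that convexity gives $v(x,s)\cdot u(x)\le 0$ on $\partial D\times[0,\frac{\pi}{2}]$, then repeats the proof of Fenn's theorem. Your remark that $v(x,0)\neq v(x,\frac{\pi}{2})$ in general, so that condition~(2) of Theorem~\ref{thm:main} fails for $v$, is correct, and the paper does not address it.

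\textbf{The gap.} The gap is the step you flag, and more care will not close it, because the inequality is false. It fails even at a flat boundary point, and even when every vertex beyond the tangent line has a value at least as large as every vertex inside. Here is a counterexample.
\begin{itemize}
  \item[--] Put $x=0$ and $u(x)=(0,1)$. Let $D$ coincide with $\{y_{2}\le 0\}$ on $B(0,\delta)$; a stadium rotated so that the labelling below corresponds to some $s\in[0,\frac{\pi}{2}]$ will do.
  \item[--] Take $a_{k}=\rho(\cos\theta_{k},\sin\theta_{k})$ with $\rho=a/\sqrt{2}$ and $\theta_{k}=\frac{7\pi}{8}+(k-1)\frac{\pi}{2}$. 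Then $a_{1},a_{4}\notin D$, while $a_{2},a_{3}\in\operatorname{int}D$.
  \item[--] Let $f=-\beta$, where $\beta\ge 0$ is a bump supported in $\operatorname{int}D$ near $a_{3}$, with $\beta(a_{3})=1$ and $\beta(a_{2})=0$. This $f$ satisfies (i) under any reading.
\end{itemize}
The plane through $(a_{1},0)$, $(a_{2},0)$, $(a_{3},-1)$ contains the side $a_{1}a_{2}$ at height $0$. It descends in the direction $a_{3}-a_{2}$, whose $u$-component is $\rho\bigl(\sin\frac{3\pi}{8}-\sin\frac{\pi}{8}\bigr)>0$. So the plane descends outward, its upward normal tilts outward, and $v(x,s)\cdot u(x)>0$. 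The tilt along $u$ of a plane through three vertices is governed by the directions of the square's sides, not by which side of the tangent line the vertices lie on. So no monotonicity argument of the kind you sketch can work.

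\textbf{Why $\psi/4$ does not rescue it.} Passing to $\psi/4$ does not help as written. You would then need the boundary sign for $\psi$, which is a sum of four normalized normals, and you only argue about $v$.

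\textbf{A repair.} Discard the planes and feed Theorem~\ref{thm:main} the linear map
\[
  \Psi(x,s)=\bigl(f(a_{1})-f(a_{3})\bigr)(a_{1}-a_{3})
           +\bigl(f(a_{2})-f(a_{4})\bigr)(a_{2}-a_{4}).
\]
This fixes condition~(2) and the boundary sign at once.
\begin{itemize}
  \item[--] $\Psi$ is invariant under the relabelling $s\mapsto s+\frac{\pi}{2}$, so condition~(2) holds.
  \item[--] Since $\phi=f(a_{1})-f(a_{2})+f(a_{3})-f(a_{4})$ and the diagonals are perpendicular and nonzero, $\phi=0=\Psi$ forces all four values to coincide.
  \item[--] At $x\in\partial D$, $\Psi\cdot u$ is a sum of terms $2\bigl(f(x+d)-f(x-d)\bigr)\,(d\cdot u(x))$ over the two half-diagonals $d$. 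Each term is $\ge 0$ provided $f(x+d)\ge f(x-d)$ whenever $d\cdot u(x)>0$.
\end{itemize}
That last inequality holds if (i) is read literally, so that $f\le 0$ on the whole $\delta$-collar of $\partial D$, and hence $f=0$ on its exterior part. Then $x+d$ lies beyond the supporting line, so $f(x+d)=0\ge f(x-d)$. Under your reading, with $f\le 0$ only on the inner collar, it can fail on a curved boundary. There $x-d$ may lie outside $D$ and carry a large positive value.

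Strictness then comes from adding $\varepsilon(y-c)$ with $c\in\operatorname{int}D$, which preserves the $\frac{\pi}{2}$-periodicity. Your compactness argument then finishes the proof.
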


\begin{proof}
  It is enough to consider case~(i).  In this case, we consider the
  space $D\times\bigl[0,\frac{\pi}{2}\bigr]$ and the maps
  $\phi:D\times\bigl[0,\frac{\pi}{2}\bigr]\to\mathbb{R}$ and
  $v:D\times\bigl[0,\frac{\pi}{2}\bigr]\to\mathbb{R}^{2}$, where
  the map $\phi$ and the map $v$ are defined by the vertices of the
  square as in Fenn's theorem, but with side length $a$.  Since the
  domain $D$ is convex, it is easy to prove that
  $v(x,s)\cdot u(x)\le0$ for each $x\in\partial D$, where $u(x)$ is
  the unit vector field normal to the boundary and pointing
  outward.  Then, as in the new proof of Fenn's theorem, we can
  apply Theorem~\ref{thm:main} to complete the proof.
\end{proof}

%======================================================================
\section{Table Problem on Saddle Surfaces}
%======================================================================

In this section we first present a sufficient condition for a
square table to be placed horizontally on the surface defined by a
general graph $G(f)$.  For a continuous function
$f:\mathbb{R}^{2}\to\mathbb{R}$, in view of Theorem~\ref{thm:main} we have the following fact.

\begin{theorem}\label{thm:surface}
  Let $D$ be a domain in $\mathbb{R}^{2}$ with its boundary
  $\partial D$ homeomorphic to the unit circle $S^{1}$.  Let
  $\psi_{r}(x,\alpha)$ be the map defined
  in~\eqref{eq:Phi} with $r$ being the side length of the
  corresponding square.  Assume the map $\psi(x,\alpha)$ restricted
  to the boundary $\partial D$ has odd degree; then one can place
  horizontally a square table with side length $r$ on the graph
  $G(f)$, and the table is centered at a point in $D$.
\end{theorem}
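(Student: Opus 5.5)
We follow the pattern of the new proof of Fenn's theorem: reduce to Theorem~\ref{thm:main} on the cylinder $D\times\bigl[0,\frac{\pi}{2}\bigr]$, using $g=\phi_{r}$ and the planar map built from the normals, here $\psi_{r}$. By \eqref{eq:Phi}, placing the table horizontally means finding $(x,\alpha)$ with $\phi_{r}(x,\alpha)=0$ and $v_{r}(x,\alpha)=0$. On $\phi_{r}^{-1}(0)$ the four vertices lie in one plane $\Pi$, and the four normals coincide, so $\psi_{r}=4v_{r}$ there. It therefore suffices to find a common zero of $\phi_{r}$ and $\psi_{r}$ on $D\times\bigl[0,\frac{\pi}{2}\bigr]$.

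\textbf{Step 1 (the hypotheses on $g$).} Take $g=\phi_{r}$ restricted to $D\times\bigl[0,\frac{\pi}{2}\bigr]$. Properties (i) and (ii) of $\phi$ from the introduction give conditions (3) and (4) of Theorem~\ref{thm:main} directly, with $a=0$ and $b=\frac{\pi}{2}$.

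\textbf{Step 2 (periodicity of $\psi_{r}$).} Since $\psi_{r}(x,\alpha)=\psi_{r}(x,\alpha+\frac{\pi}{2})$, condition (2) holds for $f=\psi_{r}$.

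\textbf{Step 3 (replacing the boundary condition).} We cannot use condition (1), since nothing forces $\psi_{r}$ to point outward. Instead we use the odd-degree hypothesis. The proof of Theorem~\ref{thm:main} used (1) only to conclude that $\deg \psi_{r}|_{\partial D\times\{a\}}$ equals $1$, hence is odd. Everything after that is parity bookkeeping: the odd number of zeros on the bottom face, the copies of those zeros on the top face via (2), the curves of $f^{-1}(0)$, and the sign change of $g$ along a connecting arc. That part uses only oddness of the degree together with $f\neq 0$ on $\partial D\times[a,b]$.

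To supply these ingredients we proceed as follows.
\begin{itemize}
\item[(a)] First assume $\psi_{r}\neq 0$ on $\partial D\times\bigl[0,\frac{\pi}{2}\bigr]$. This is implicit in the hypothesis that the degree is defined. The hypothesis is stated at a fixed $\alpha$, but the degree is constant in $\alpha$ by homotopy invariance, so it is odd on every slice, in particular on $\alpha=0$.
\item[(b)] Since $\partial D\cong S^{1}$, the closure of $D$ is a closed disk by the Schoenflies theorem. We transport the problem by a homeomorphism to the unit disk, where the smooth-generic approximation argument of Theorem~\ref{thm:main} runs verbatim. Convexity and the $C^{1}$ boundary are not needed for the counting argument.
\end{itemize}

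\textbf{Step 4 (the continuous case).} Rerun the smooth-generic argument, then pass to the continuous case by the same $\delta$-approximation as at the end of the proof of Theorem~\ref{thm:main}. The approximating map must keep the boundary degree odd, which holds because a small perturbation that does not vanish on the boundary preserves the degree.

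\textbf{Step 5 (back to $v_{r}$).} Return from $\psi_{r}$ to $v_{r}$ using $\psi_{r}=4v_{r}$ on $\phi_{r}^{-1}(0)$. This gives $\phi_{r}=0$ and $v_{r}=0$ at some $(\bar x,\bar\alpha)$, so the table with center $\bar x\in D$ is horizontal.

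\textbf{Main obstacle.} The delicate point is Step 3: checking that the counting in the proof of Theorem~\ref{thm:main} really uses only the parity of the degree together with nonvanishing of $\psi_{r}$ on the lateral boundary. In particular, the zero set of $\psi_{r}$ must not escape through $\partial D\times\bigl[0,\frac{\pi}{2}\bigr]$, since that would break the pairing of segment endpoints. I would first make the nonvanishing assumption explicit, noting that it is part of what ``has odd degree'' means. Then I would verify that every curve of $\psi_{r}^{-1}(0)$ is either closed or has both endpoints on $D\times\{0\}\cup D\times\{\frac{\pi}{2}\}$.
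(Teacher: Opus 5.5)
Your proposal is correct and follows essentially the paper's own route. The paper gives no argument beyond ``in view of Theorem~\ref{thm:main}'': it applies that theorem with $\psi_r$ in the role of $f$ and $\phi_r$ in the role of $g$. Properties (i)--(ii) of $\phi$ give (3)--(4), the $\frac{\pi}{2}$-periodicity of $\psi_r$ gives (2), and the odd-degree hypothesis replaces condition~(1). Your explicit requirement that $\psi_r\neq 0$ on $\partial D\times\bigl[0,\frac{\pi}{2}\bigr]$ and your Schoenflies reduction are exactly the clarifications that this invocation silently needs.

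One detail to add in Step~4: the smooth generic approximation must itself satisfy condition~(2), because you use that the top-face zeros are copies of the bottom-face ones. Build it on $D\times\mathbb{R}/\frac{\pi}{2}\mathbb{Z}$, so it is periodic in $\alpha$. Then subtract a small constant that is a regular value both of the map and of its restriction to the slice $\alpha=0$. The paper's own proof of Theorem~\ref{thm:main} glosses over the same point.
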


We are interested in table problems on some kinds of surfaces
defined by a special class of continuous functions
$f:\mathbb{R}^{2}\to\mathbb{R}$, the saddle surfaces as defined
below.

\begin{definition}\label{def:saddle}
  Consider a continuous map
  $f:\mathbb{R}^{2}\to\mathbb{R}$.  The graph of $f$ in
  $\mathbb{R}^{3}$ is called a \emph{saddle surface} with center
  $p=(p_{1},p_{2})$, if $f$ satisfies the following conditions:
  \begin{enumerate}
    \item[(i)] $f(x_{1},x_{2})>f(\bar{x}_{1},x_{2})$, provided
      $|x_{1}-p_{1}|>|\bar{x}_{1}-p_{1}|$;
    \item[(ii)] $f(x_{1},x_{2})<f(x_{1},\bar{x}_{2})$, provided
      $|x_{2}-p_{2}|>|\bar{x}_{2}-p_{2}|$.
  \end{enumerate}
\end{definition}

Without loss of generality, we only consider saddle surfaces with
center at the origin.

Here are some examples of saddle surfaces.
\begin{example}\label{ex:saddle1}
  The surface defined by $x_{3}=f(x_{1},x_{2})=ax_{1}^{2}-bx_{2}^{2}$.
\end{example}

\begin{example}\label{ex:saddle2}
  The surface defined by $x_{3}=f(x_{1},x_{2})=g(x_{1})-h(x_{2})$,
  where $g(x)$ and $h(x)$ are even functions and are increasing on
  $[0,\infty)$.
\end{example}

We are interested in the following questions.

\noindent\textbf{Q1.} Can we balance horizontally an equilateral
triangle table on the surface defined by
$x_{3}=f(x_{1},x_{2})=ax_{1}^{2}-bx_{2}^{2}$?

\noindent\textbf{Q2.} Can we balance horizontally a rectangular
table on the surface defined by
$x_{3}=f(x_{1},x_{2})=g(x_{1})-h(x_{2})$ as given in
Example~\ref{ex:saddle2}?

More generally, can we balance horizontally an equilateral triangle
table or a rectangular table on a saddle surface defined in
Definition~\ref{def:saddle}?

The following fact is easy to prove.
\begin{proposition}\label{prop:saddle}
  On the saddle surface defined by Example~\ref{ex:saddle2}, we can
  balance horizontally a square table of any size.
\end{proposition}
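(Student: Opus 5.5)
The direct route is to write down an explicit square whose four vertices carry equal values of $f(x_{1},x_{2})=g(x_{1})-h(x_{2})$, rather than appealing to Theorem~\ref{thm:surface}. Since the claim is that a square of \emph{any} side length $r>0$ can be balanced horizontally, it suffices to find a square of side $r$ in the plane whose four vertices have equal heights. The tilted, or diamond, position is the natural candidate, so I take the square centered at the origin with diagonals along the coordinate axes. Its vertices are
\[
v_{1}=(c,0),\quad v_{2}=(0,c),\quad v_{3}=(-c,0),\quad v_{4}=(0,-c),\qquad c=\tfrac{r}{\sqrt{2}}.
\]
Using evenness of $g$ and $h$, we get $f(v_{1})=f(v_{3})=g(c)-h(0)$ and $f(v_{2})=f(v_{4})=g(0)-h(c)$.

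These two values need not coincide, so the diamond has to be shifted. I move its center along the $x_{2}$-axis to $(0,t)$, with vertices $(\pm c,t)$ and $(0,t\pm c)$. The two vertices $(\pm c,t)$ have the common value $g(c)-h(t)$ by evenness of $g$. The other two have values $g(0)-h(t+c)$ and $g(0)-h(t-c)$, and these differ in general. So this one-parameter family does not close up. The fix is to use the full two-parameter freedom: move the center to $(s,t)$ and also rotate by an angle $\theta$. We then need three equations, which is harder.

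The better idea is to exploit the symmetry of $f$ under $x_{1}\mapsto -x_{1}$ and $x_{2}\mapsto -x_{2}$. Any square centered at the origin has vertices $\pm w$ and $\pm Jw$, where $J$ is rotation by $\pi/2$. Evenness of $g$ and $h$ gives $f(-w)=f(w)$ and $f(-Jw)=f(Jw)$. So, with the center fixed at the origin, only \emph{one} equation remains:
\[
F(\theta):=f(w_{\theta})-f(Jw_{\theta})=0,\qquad w_{\theta}=c(\cos\theta,\sin\theta).
\]
Replacing $\theta$ by $\theta+\pi/2$ exchanges $w$ and $Jw$ up to a sign, and $f$ is even in each coordinate, so $F(\theta+\pi/2)=-F(\theta)$. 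The function $F$ is continuous, so by the intermediate value theorem it has a zero on $[0,\pi/2]$. At that zero all four vertices have the same height, which is the required horizontal square. Explicitly, at $\theta=\pi/4$ we have $w=(c',c')$ and $Jw=(-c',c')$ with $c'=c/\sqrt{2}$, and $f(w)=f(Jw)$ holds trivially. In other words, the axis-parallel square centered at the origin with vertices $(\pm r/2,\pm r/2)$ already works, since every vertex has value $g(r/2)-h(r/2)$.

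Conclusion and main check: the proof reduces to the one-line observation that $f(\pm r/2,\pm r/2)=g(r/2)-h(r/2)$ for all four sign choices, which uses only evenness. The monotonicity hypotheses are not even needed. The main thing to verify is the interpretation of ``balance horizontally'': the table's four feet must lie on the graph, the table must be level, and its side length must equal the prescribed $r$. All three hold for the lifted square $\{(\pm r/2,\pm r/2,\,g(r/2)-h(r/2))\}$, which is congruent to the given table.
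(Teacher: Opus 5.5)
Your proof is correct. The paper gives no argument of its own; it only says the fact is easy to prove. So the relevant comparison is with the one tool it sets up in that section, the odd-degree criterion of Theorem~\ref{thm:surface}. Going that way would mean computing the boundary degree of $\psi_r$ on a curve around the saddle center, which is where the monotonicity of $g$ and $h$ would have to enter. It would also only locate the center somewhere inside the enclosed domain.

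Your explicit construction bypasses all of this. The axis-parallel square with vertices $(\pm r/2,\pm r/2)$ has every vertex at height $g(r/2)-h(r/2)$ by evenness alone. So it works for every $r>0$, is centered exactly at the saddle point, and uses neither monotonicity nor continuity. The price is that it relies entirely on the separable, doubly even form of $f$ in Example~\ref{ex:saddle2}. It therefore gives no handle on Conjecture~\ref{thm:saddle}, where that symmetry is absent.

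In the write-up, drop the diamond and shifted-diamond false starts and the intermediate value argument. The one-line check at $\theta=\pi/4$, together with the remark that the lifted square is congruent to the table, is the entire proof.
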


This suggests the following conjecture for
saddle surfaces in the sense of Definition~\ref{def:saddle}.
\begin{conjecture}\label{thm:saddle}
  Let $f:\mathbb{R}^{2}\to\mathbb{R}$ be a continuous map such that
  $G(f)$ is a saddle surface with its center being the origin.
  Let $Q\subset\mathbb{R}^{2}$ be the square centered at the origin
  with its sides parallel to the coordinate axes.  Then for any
  square table $S$ with its side length less than half the side
  length of $Q$, there is a point $C\in Q$ such that one can place
  horizontally the table $S$ on the saddle surface and make it
  centered at $C$.
\end{conjecture}

%======================================================================
\section{Balancing Horizontally a Rectangular Table}
%======================================================================

For the convenience of the arguments in the sequel, we give some
definitions.  By a \emph{cyclic} configuration of a four-point set,
we mean that the set
$V=[v_{1},v_{2},v_{3},v_{4}]\subset\mathbb{R}^{3}$ is contained in
a round circle.

\begin{definition}
  Given a continuous map $f:\mathbb{R}^{2}\to\mathbb{R}$, the graph
  $G(f)$ is called a \emph{Fenn graph} if the function $f$
  satisfies the conditions in Fenn's theorem for some compact
  convex domain $D\subset\mathbb{R}^{2}$.
\end{definition}

A typical configuration besides the square is the rectangular
configuration.  Up to now, no solid theory for balancing
horizontally a rectangle on a Fenn graph has appeared in the
literature.  A well-known fact is that any rectangle can be
balanced on every continuous graph $G(f)$ (which means that the
table does not wobble) due to a topological theorem by
Livesay~\cite{Livesay54}, which stated that for every
continuous function $F:S^{2}\to\mathbb{R}$ and every angle
$\theta\in(0,\pi)$, there exist two diameters of $S^{2}$ making
angle $\theta$ such that $F$ takes the same value at the four
endpoints of these two diameters.  Since the vertices of a
rectangle inscribed in a circle are exactly the endpoints of two
diameters making a fixed angle, Livesay's theorem implies that
any prescribed rectangle can be positioned on an arbitrary
continuous graph so that its four legs touch the graph.  In other
words, the table can be balanced in the sense that it does not
wobble.  However, this does not mean that the table top is
horizontal, since the common plane determined by the four contact
points need not be parallel to the $x_{1}x_{2}$-plane.

Nonetheless, many evidences show that one can place horizontally a
rectangular table of any aspect ratio on a smooth Fenn
graph, and this is even possible for any cyclic
configuration $V=[v_{1},v_{2},v_{3},v_{4}]\subset\mathbb{R}^{3}$. 
In view of the above discussion, it is natural to state the 
following fixed-size conjecture.

\begin{conjecture}\label{conj:cyclic}
  Let $G(f)$ be a Fenn graph over a compact convex domain
  $D\subset\mathbb{R}^{2}$.  Then for any rectangle
  $Q=(v_{1},v_{2},v_{3},v_{4})$ inscribed in the circle
  $S^{1}(r)=\{x\in\mathbb{R}^{2}: \|x\|=r\}$, there exists a
  transformation $T:\mathbb{R}^{2}\to\mathbb{R}^{2}$ such that
  \[
    f(Tv_{1}) = f(Tv_{2}) = f(Tv_{3}) = f(Tv_{4})
    \quad\text{and}\quad T(0)\in D.
  \]
\end{conjecture}

%======================================================================
%  Bibliography
%======================================================================
\bibliographystyle{amsplain}
\bibliography{newtable}

\end{document}